\newcommand{\Q}{\mathbb{Q}}
\newcommand{\F}{\mathbb{F}}
\newcommand{\Z}{\mathbb{Z}}
\newcommand{\N}{\mathbb{N}}
\def\11{{\mathbf 1}}
\def\cM{{\mathcal M}}
\def\cO{{\mathcal O}}
\mathchardef\alphag="7C0B
\mathchardef\betag="7C0C
\mathchardef\gammag="7C0D
\mathchardef\deltag="7C0E
\mathchardef\varepsilong="7C22
\mathchardef\varphig="7C27
\mathchardef\psig="7C20
\mathchardef\zetag="7C10
\mathchardef\epsilong="7C0F
\mathchardef\rhog="7C1A
\mathchardef\taug="7C1C
\mathchardef\upsilong="7C1D
\mathchardef\iotag="7C13
\mathchardef\thetag="7C12
\mathchardef\pig="7C19
\mathchardef\sigmag="7C1B
\mathchardef\etag="7C11
\mathchardef\omegag="7C21
\mathchardef\kappag="7C14
\mathchardef\lambdag="7C15
\mathchardef\mug="7C16
\mathchardef\xig="7C18
\mathchardef\chig="7C1F
\mathchardef\nug="7C17
\mathchardef\varthetag="7C23
\mathchardef\varpig="7C24
\mathchardef\varrhog="7C25
\mathchardef\varsigmag="7C26
\mathchardef\Omegag="7C0A
\mathchardef\Thetag="7C02
\mathchardef\Sigmag="7C06
\mathchardef\Deltag="7C01
\mathchardef\Phig="7C08
\mathchardef\Gammag="7C00
\mathchardef\Psig="7C09
\mathchardef\Lambdag="7C03
\mathchardef\Xig="7C04
\mathchardef\Pig="7C05
\mathchardef\Upsilong="7C07
\newtheorem{thm}{Theorem}
\newtheorem{lem}{Lemma}
\newtheorem{cor}[thm]{Corollary}
\newtheorem{claim}{Claim}
\newtheorem*{cor*}{Corollary}
\theoremstyle{remark}
\theoremstyle{plain}
\numberwithin{equation}{subsection}
\def\boxit#1#2{\setbox1=\hbox{\kern#1{#2}\kern#1}%
\dimen1=\ht1 \advance\dimen1 by #1
\dimen2=\dp1 \advance\dimen2 by #1
\setbox1=\hbox{\vrule height\dimen1 depth\dimen2\box1\vrule}%
\setbox1=\vbox{\hrule\box1\hrule}%
\advance\dimen1 by .4pt \ht1=\dimen1
\advance\dimen2 by .4pt \dp1=\dimen2 \box1\relax}
\begin{document}

\title[Model Completeness for Henselian Fields]{Model Completeness for Henselian fields with finite ramification valued in a $Z$-group}
\author{Jamshid Derakhshan, Angus Macintyre${}^\dag$}

\thanks{${}^\dag$Supported by a Leverhulme Emeritus Fellowship}

\address{St.~ Hilda's College, Cowley Place, Oxford OX4 1DY and Mathematical Institute, University of Oxford, 
Andrew Wiles Building, Woodstock Road, Oxford, OX2 6GG, UK}
\email{derakhsh@maths.ox.ac.uk}

\address{Queen Mary, University of London,
School of Mathematical Sciences, Queen Mary, University of London, Mile End Road, London E1 4NS, UK}
\email{angus@eecs.qmul.ac.uk}

\subjclass[2000]{}
\keywords{model theory, model completeness, Henselian valued fields}

\begin{abstract} We prove that the theory of a Henselian valued field of characteristic zero, with finite ramification, and 
whose value group is a $\Z$-group,  
is model-complete in the language of rings if the theory of its residue field is model-complete in the language of rings. We apply this to prove that every infinite algebraic extension of the field of $p$-adic numbers $\Q_p$ 
with finite ramification is model-complete in the language of rings. For this, we give a necessary and sufficient condition for model-completeness of the theory of a perfect pseudo-algebraically closed field with pro-cyclic absolute 
Galois group.
\end{abstract}

\maketitle

\section{Introduction}

Model completeness for the theory of $p$-adic numbers $\Q_p$ in the language of rings 
follows from the theorem of Macintyre \cite{Macintyre} on quantifier elimination for $\Q_p$ in the Macintyre language.  
For a finite extension of $\Q_p$, model-completeness in the ring language can be deduced from a theorem of Prestel and 
Roquette \cite[Theorem 5.1, pp. 86]{PR} on model-completeness in the 
Prestel-Roquette language combined with an 
existential definition of the valuation ring in the ring language due to B\'{e}lair \cite{belair}. The language of 
Prestel-Roquette involves certain 
constant symbols and a symbol for the valuation. However, 
the proof of the theorem of Prestel-Roquette gives model completeness in the language of rings. Model-completeness for a finite extension of $\Q_p$ in the language of rings has also been deduced in \cite{DM-pres} from model-completeness for the groups of multiplicative residue classes of the field which follows from model-completeness for certain 
pre-ordered abelian groups which we call finite-by-Preburger groups. (Curiously, the model-completeness in the ring language for a finite extension of $\Q_p$ was not observed till \cite{DM-pres}). In this paper, we shall prove a general model-completeness result for Henselian fields, and apply it to certain infinite 
extensions of $\Q_p$.

Let us recall that an ordered abelian group is called a $\Z$-group if it is 
elementarily equivalent to $\Z$ as an ordered abelian group. Let $K$ be a valued field with valuation $v$ and 
residue field $k$. The (absolute) ramification index $e$ of $K$ is defined to be the cardinality of the set if 
elements $\gamma$ such that $0<\gamma\leq v(p)$ if $k$ has characteristic $p>0$, and defined to be $0$ if $k$ has characteristic $0$. 
If $e<\infty$, we say that $K$ is finitely ramified or has finite ramification $e$. If $e=0$ or $e=1$, we say that $K$ is 
unramified. For example, the extension of $\Q_p$ got by adjoining an $e$th root of $p$, has ramification index $e$, whereas an extension of $\Q_p$ got by adjoining roots of unity of order prime to $p$ is unramified. 

%In this paper, we prove a general model-completeness result relative to the residue field. 
Our main result is the following.
\begin{thm}\label{mc-hens} Let  $K$ be a Henselian valued field of characteristic zero with finite ramification. Suppose the value group of $K$ is a $\Z$-group.  
If the theory of the residue field of $K$ is model-complete in the language of rings, then the theory of $K$ is 
model-complete in the language of rings.\end{thm}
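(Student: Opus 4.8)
The plan is to route through an enriched valuation language, establish model-completeness there by an Ax--Kochen--Ershov transfer principle, and then descend to the language of rings by means of a uniform definability result for the valuation ring. First I would set $T=\mathrm{Th}(K)$ (in the language of rings) and apply Robinson's test: it suffices to prove that for any two models $K\subseteq L$ of $T$ the embedding is elementary. The enabling step, and the bridge between the two languages, is to show that the valuation ring $\cO_K$ is definable in every model of $T$ \emph{uniformly} by both an existential and a universal formula of the language of rings. This is the analogue, for our fields, of B\'elair's existential definition of the valuation ring for finite extensions of $\Q_p$. The finite-ramification and $\Z$-group hypotheses are exactly what make this feasible: the ramification index $e$ is constant across models of $T$ (it is the number of positive $\gamma\le v(p)$ in the value group, a quantity fixed in a $\Z$-group), so $v(p)$, the minimal positive element $v(\pi)$, and the relation $v(x)\le v(y)$ become controllable by ring formulas. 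Granting this, any ring embedding $K\subseteq L$ satisfies $\cO_L\cap K=\cO_K$, so it is automatically an embedding of valued fields, and it suffices to prove $K\preceq L$ in the language $\cL_{\mathrm{val}}$ obtained from the ring language by adjoining a predicate for the valuation ring.

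Next I would pass to the associated many-sorted structure and read off the induced embeddings on the residue field, $k_K\hookrightarrow k_L$, and on the value group, $\Gamma_K\hookrightarrow\Gamma_L$. The residue-field embedding is elementary by the hypothesis that the theory of $k_K$ is model-complete in the ring language. The value-group embedding is elementary as well: $\Gamma_K$ and $\Gamma_L$ are $\Z$-groups, and since the embedding is a ring embedding it fixes $p$ and hence $v(p)$; because $e$ is the same in both groups, $v(p)=e\cdot v(\pi)$ forces the minimal positive generator to be preserved, so the embedding respects the definable Presburger structure (order and all congruences $\equiv_n$) and is therefore elementary by model-completeness of Presburger arithmetic in that language.

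The heart of the argument is an Ax--Kochen--Ershov transfer principle for Henselian valued fields of characteristic zero with finite ramification: if $K\subseteq L$ are such fields with $\Z$-group value groups and the induced extensions of residue field and of value group are both elementary, then $K\preceq L$ in $\cL_{\mathrm{val}}$. The natural route is a relative quantifier-elimination result in the style of Pas, using the leading-term (or $\RV$) structure together with angular-component data. In mixed characteristic a single angular-component map does not suffice, so the finite ramification is used to replace the residue field by the finite tower of higher residue rings $\cO_K/p^n\cO_K$ (equivalently the leading-term structures $\RV_n$), each of which is interpretable from the residue field and value group once $e$ is fixed; one then checks that these structures carry no elementary content beyond what is already controlled on the two basic sorts. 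Combining relative elementarity in $\cL_{\mathrm{val}}$ with the existential-and-universal definability of $\cO_K$ yields $K\preceq L$ in the language of rings, completing Robinson's test.

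I expect the main obstacle to be precisely this Ax--Kochen--Ershov transfer in the finitely ramified mixed-characteristic setting: unlike the equicharacteristic-zero case one cannot invoke the classical theorem directly, and must instead control the higher residue rings $\cO_K/p^n\cO_K$ and the associated leading-term structures, verifying that finite ramification renders them interpretable over the residue field and value group and that they contribute no independent elementary information. A secondary but genuine difficulty is the uniform existential-and-universal definability of the valuation ring in the ring language, on which the entire descent from $\cL_{\mathrm{val}}$ to the ring language rests.
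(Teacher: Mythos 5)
Your opening moves are sound and in fact coincide with the paper's: the B\'elair-style existential definition of $\cO_K$, combined with the $\Z$-group observation that $\cM_K=\{x: x^ep^{-1}\in\cO_K\}$ (which makes the maximal ideal, hence the complement of $\cO_K$, existentially definable as well), shows that any ring embedding of models of $T$ is a valued-field embedding; and your argument that the value-group embedding is elementary (the identity $v(p)=e\cdot v(\pi)$ forces preservation of the minimal positive element, hence of all congruences) is correct. The gap is that the entire content of the theorem is then delegated to your ``Ax--Kochen--Ershov transfer in finitely ramified mixed characteristic,'' which you do not prove --- you name a strategy and yourself flag it as the main obstacle. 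Worse, that strategy rests on a false premise. The structures $\cO_K/p^n\cO_K$ (equivalently $\RV_n$) are \emph{not} interpretable from the residue field and value group once $e$ is fixed, and they \emph{do} carry independent elementary content: for $p$ odd and $u$ a non-square unit, $\Q_p(\sqrt{p})$ and $\Q_p(\sqrt{up})$ are Henselian of characteristic zero with the same ramification index $e=2$, the same residue field $\F_p$ (whose theory is model-complete) and the same value group $\Z$, yet $\cO/p^2\cO$ are non-isomorphic rings and the two fields are not elementarily equivalent. The missing invariant is exactly the ramification data --- the Eisenstein polynomial of a uniformizer --- which your two basic sorts cannot see. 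Within a fixed complete theory this data is shared between $K$ and $L$, and the embedding version of the transfer you want is in fact true; but proving that a ring embedding which is elementary on residue fields is elementary on the higher residue structures is the real work, not a routine check.

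The paper closes precisely this hole by a concrete route you may want to compare with. After saturating (so the fields are $\omega$-pseudo-complete), it coarsens the valuation by the convex subgroup $\Delta$ generated by $v(p)$: the coarse valued fields are equicharacteristic zero with divisible value groups, so the classical Ax--Kochen--Ershov/Ziegler theorem reduces the problem to the core fields $K_i^{\circ}$, which are then Cauchy-complete discretely valued fields. Cohen--Serre structure theory gives $K_1^{\circ}=W(k_1)(\pi)$ with $\pi$ a root of an Eisenstein polynomial $E$ over $W(k_1)$; the definability lemma shows $E$ stays Eisenstein over $W(k_2)$ and $K_2^{\circ}=W(k_2)(\pi)$; a back-and-forth argument using saturation and model-completeness of the residue field gives $W(k_1)\preceq W(k_2)$; and finally the degree-$e$ extension is interpreted inside the Witt field via the multiplication matrix of $\pi$, whose entries are the coefficients of $E$ --- exactly the parameters absent from your interpretability claim. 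To complete your plan you would need either to carry out an argument of this kind, or to invoke a precise relative quantifier-elimination theorem (e.g.\ Basarab's, relative to the full system of higher residue rings) and then separately prove that the induced embeddings on those rings are elementary, with the Eisenstein data included as parameters.
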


In the case when the residue field of $K$ has characteristic zero, the result is a well-known consequence of the Ax-Kochen-Ershov theory and was worked out in detail by Ziegler in \cite{ziegler-thesis}. Thus we shall assume that the residue field $k$ has characteristic $p>0$. 

 %Applying Theorem \ref{mc-hens}, we can prove model-completeness for certain infinite extensions of $\Q_p$.

\begin{thm}\label{p-adic} Let $K$ be an infinite algebraic extension of $\Q_p$ with finite ramification. Then the theory of $K$ in the language of rings is model-complete.\end{thm}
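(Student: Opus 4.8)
The plan is to obtain Theorem~\ref{p-adic} as an instance of Theorem~\ref{mc-hens}, so the whole task reduces to verifying the three hypotheses of the latter for an infinite algebraic extension $K/\Q_p$ of finite ramification. First I would equip $K$ with the unique extension $v$ of the $p$-adic valuation of $\Q_p$; since $\Q_p$ is Henselian and $K/\Q_p$ is algebraic, this extension is unique and $(K,v)$ is again Henselian, because algebraic extensions of Henselian fields are Henselian. As $K$ contains $\Q_p$, it has characteristic zero, and finite ramification is part of the hypothesis. This settles the first hypothesis of Theorem~\ref{mc-hens}.

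Next I would identify the value group. Normalising $v$ so that $v(p)=1$, the value group $\Gamma_K=v(K^\times)$ embeds into the value group $\Q$ of an algebraic closure $\overline{\Q_p}$, so we may regard $\Gamma_K$ as a subgroup of $\Q$ with $1\in\Gamma_K$. By definition, finite ramification $e$ means that the set $\{\gamma\in\Gamma_K : 0<\gamma\le 1\}$ has exactly $e$ elements. A subgroup of $\Q$ with only finitely many elements in $(0,1]$ cannot be dense, hence is discrete; if $\delta$ is its least positive element then every element is an integer multiple of $\delta$, and writing $1=m\delta$ gives $e=m$ and $\Gamma_K=\delta\Z=\tfrac1e\Z\cong\Z$. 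Thus $\Gamma_K$ is a $\Z$-group, which is the second hypothesis.

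The substantive point is the third hypothesis: model-completeness of the residue field $k$. Here I would first observe that $k$ is an algebraic extension of $\F_p$, and that since the ramification is finite while $[K:\Q_p]$ is infinite, the residue extension $k/\F_p$ must itself be infinite. Consequently $k$ is perfect (being algebraic over the perfect field $\F_p$), it is pseudo-algebraically closed (every infinite algebraic extension of a finite field is PAC, by the Lang--Weil estimates), and its absolute Galois group $\mathrm{Gal}(\overline{\F_p}/k)$ is a closed subgroup of $\hat{\Z}=\mathrm{Gal}(\overline{\F_p}/\F_p)$, hence pro-cyclic. I would then invoke the necessary-and-sufficient criterion for model-completeness of a perfect PAC field with pro-cyclic absolute Galois group that is announced in the introduction, and check that its hypothesis is met by every infinite algebraic extension of $\F_p$. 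With all three conditions verified, Theorem~\ref{mc-hens} delivers the model-completeness of $K$.

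I expect the last step to be the genuine obstacle. The verifications of the Henselian property and of the $\Z$-group structure of $\Gamma_K$ are routine once the unique extension of the valuation and the embedding $\Gamma_K\hookrightarrow\Q$ are in place. By contrast, establishing model-completeness of $k$ requires translating it into a purely Galois-theoretic and arithmetic statement about $k$ — in effect about the Steinitz (supernatural) number attached to the pro-cyclic group $\mathrm{Gal}(\overline{\F_p}/k)$ — and then confirming that this statement holds for all infinite algebraic extensions of $\F_p$. Since, conversely, each such field is realised as the residue field of an unramified infinite algebraic extension of $\Q_p$, this verification is exactly what the theorem demands, and it is where the PAC criterion does the real work.
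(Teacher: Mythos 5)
Your proposal is correct and takes essentially the same route as the paper: the paper likewise deduces Theorem \ref{p-adic} by observing that the residue field is an infinite algebraic extension of $\F_p$, hence perfect, pseudo-algebraically closed, with pro-cyclic absolute Galois group, and then combining Theorem \ref{mc} with Theorem \ref{mc-hens}; your verifications of Henselianity, of the value group being $\tfrac1e\Z\cong\Z$, and of the infinitude of the residue extension are exactly the details the paper leaves implicit. The one remark worth making is that the step you flag as the ``genuine obstacle''---checking condition (\ref{cond}) for the residue field $k$---is in fact trivial and requires no Steinitz-number analysis: since $k$ is algebraic over $\F_p$, every finite extension of $k$ consists of elements algebraic over $\F_p$, so the criterion of Theorem \ref{mc} is satisfied automatically.
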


(The case of {\it finite} extensions of $\Q_p$ is discussed earlier). Let us recall that a field $K$ is called pseudo-algebraically closed if every absolutely irreducible variety defined over $K$ has a $K$-rational point. 
To deduce Theorem  \ref{p-adic} from Theorem \ref{mc-hens} we prove the following result which gives a necessary and sufficient condition for model-completeness in the ring language of the theory of perfect pseudo-algebraically closed fields with pro-cyclic absolute Galois group. Given a field $K$, we denote the absolute Galois group of $K$ by $Gal(K)$.
\begin{thm}\label{mc} Let $K$ be a perfect pseudo-algebraically closed field such that $Gal(K)$ is pro-cyclic. 
Let $k$ denote the prime subfield of $K$. Then the theory of $K$ in the language of rings is model-complete if and only if 
\begin{equation}\label{cond}
K^{alg}=K \otimes_{Abs(K)} k^{alg},\end{equation}
that is, every finite algebraic extension of $K$ is generated by elements that are algebraic over $k$.
\end{thm}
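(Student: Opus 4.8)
The plan is to translate both the condition (\ref{cond}) and model-completeness into statements about the restriction map $\rho_K : Gal(K) \to Gal(k^{alg}/Abs(K))$ and then to invoke the model theory of perfect PAC fields. First I will record that $\rho_K$ is always surjective: since $Abs(K) = K \cap k^{alg}$ is perfect (in characteristic $p$ it is an algebraic extension of $\F_p$), the extension $k^{alg}/Abs(K)$ is Galois, so $K$ and $k^{alg}$ are automatically linearly disjoint over $K \cap k^{alg}$. Hence (\ref{cond}) is equivalent to injectivity of $\rho_K$, i.e. to $K^{alg} = K \cdot k^{alg}$. I will also use that being perfect PAC with pro-cyclic Galois group, together with the isomorphism type of the pair $(Abs(K), \rho_K)$, is preserved under elementary equivalence (the Elementary Equivalence Theorem for PAC fields); in particular injectivity of $\rho$ holds for every model of $Th(K)$ as soon as it holds for $K$.

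The engine is the elementary-embedding criterion for PAC fields: for perfect PAC $M \subseteq N$ one has $M \preceq N$ if and only if (a) $M$ is relatively algebraically closed in $N$, and (b) the restriction map $res : Gal(N) \to Gal(M)$, $\sigma \mapsto \sigma|_{M^{alg}}$, is an isomorphism. Necessity of (a) is elementary (a monic irreducible polynomial over $M$ of degree $>1$ with a root in $N$ would, under $M \preceq N$, acquire a root in $M$); the remainder is the relativized form of the Elementary Equivalence Theorem, with the base field $M$ playing the role of the prime field and the relative algebraic closure of $M$ in $N$ playing the role of $Abs$.

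For the direction (\ref{cond}) $\Rightarrow$ model-complete, I will take any $M \subseteq N$ with $M, N \models Th(K)$, so that both satisfy (\ref{cond}). Condition (b) is then automatic given (a): once $M$ is relatively algebraically closed $res$ is surjective, and it is injective because $N^{alg} = N \cdot k^{alg} \subseteq N \cdot M^{alg}$ forces $N \cdot M^{alg} = N^{alg}$. So everything reduces to (a). If $x \in N$ is algebraic over $M$, then $x \in M^{alg} = M \cdot k^{alg}$, and injectivity of $\rho_M$ identifies $M(x)/M$ with $M \cdot L$ for $L = M(x) \cap k^{alg}$, a finite extension of $Abs(M)$ contained in $Abs(N)$; thus (a) reduces to the equality $Abs(M) = Abs(N)$. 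We already know $Abs(M) \subseteq Abs(N)$ and $Abs(M) \cong Abs(N)$, and in characteristic $p$ these are subfields of $\overline{\F_p}$ classified by Steinitz numbers, so abstractly isomorphic nested subfields coincide, giving $Abs(M) = Abs(N)$ and hence $M \preceq N$. (The characteristic $0$ case requires the analogous statement that nested isomorphic subfields of $\overline{\Q}$ carrying pro-cyclic absolute Galois groups must coincide.)

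For the converse I will argue by contraposition: assume $\rho_K$ is not injective, so $\ker \rho_K = Gal(K^{alg}/K\cdot k^{alg})$ is a nontrivial pro-cyclic group, whence there is a finite extension $F = K(\alpha)/K$ of prime degree with $F \not\subseteq K \cdot k^{alg}$ and $Abs(F) = Abs(K)$. The aim is to build a perfect PAC field $N \supseteq F \supseteq K$ with $N \models Th(K)$; then $\alpha \in N \setminus K$ is algebraic over $K$, so $K$ is not relatively algebraically closed in $N$, the existential formula asserting that the minimal polynomial of $\alpha$ has a root holds in $N$ but fails in $K$, and $K \not\preceq N$. The construction of $N$ is the main obstacle: one must realize the prescribed data $(Abs(K), \rho_K)$ over $F$, i.e. produce a PAC extension $N \supseteq F$ with $Abs(N) = Abs(K)$ and $Gal(N) \cong Gal(K)$ compatibly with $\rho_K$. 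This is exactly the kind of embedding/realization problem handled by the structure theory of PAC fields: pro-cyclic groups are projective, and one can attach to $F$ a regular PAC extension carrying the desired projective Galois group and absolute part. Keeping $F$ inside $N$ while forcing $N \equiv K$, and checking that this does not inadvertently restore relative algebraic closedness, is where the care will be required.
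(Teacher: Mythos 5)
Your forward direction is close in spirit to the paper's (both end at the embedding criterion for perfect PAC fields: relative algebraic closedness plus the restriction map on absolute Galois groups being an isomorphism, with injectivity extracted from (\ref{cond}) exactly as you do), but it stalls at the step you yourself flag: in characteristic $0$ you need nested, abstractly isomorphic algebraic extensions of $\Q$ to coincide. That is not a standard fact, and the side condition you attach (``carrying pro-cyclic absolute Galois groups'') is miscast, since it is $M$ and $N$, not $Abs(M)$ and $Abs(N)$, whose Galois groups are pro-cyclic; $Abs(K)$ can be $\Q$ itself. The repair is easy and makes the Steinitz-number detour unnecessary in either characteristic: since $M\subseteq N$ and $M\equiv N$, for each polynomial $g$ over the prime field the sentence ``$g$ has exactly $r$ roots'' transfers from $M$ to $N$, and every root of $g$ in $M$ is one in $N$; hence $N$ has no roots of $g$ outside $M$, so $Abs(N)=Abs(M)$. (The paper avoids this entirely by observing that (\ref{cond}) is itself expressible in $Th(K)$ --- ``some root of the fixed polynomial $f_n$ over $k$ generates the unique degree-$n$ extension'' --- from which regularity of any embedding of models is immediate.)

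The genuine gap is the converse. Your reduction to a prime-degree extension $F=K(\alpha)$ with $Abs(F)=Abs(K)$ matches the paper's minimality argument and is fine, but the entire content of this direction is the construction of a field $N\supseteq K$ containing a root of the minimal polynomial of $\alpha$ with $N\models Th(K)$, and you explicitly defer it (``the main obstacle,'' ``where the care will be required''). Appealing to projectivity of pro-cyclic groups and realization theorems for PAC fields does not by itself secure the two properties that make the argument work: first, that $N$ acquires \emph{no new absolute numbers}, i.e. that none of the polynomials over $k$ unsolvable in $K$ become solvable in $N$; second, that the full Cherlin--van den Dries--Macintyre invariant (the Galois diagram $\Delta(K)$, not merely the isomorphism types of $Gal(K)$ and $Abs(K)$) is preserved, which is what $N\equiv K$ requires when $Gal(K)$ is a proper quotient of $\hat{\Z}$. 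The paper's proof is precisely this work: it forms the theory $\Lambda=Diag(K)\cup\Sigma_{PAC}\cup\{\exists x\,(f(x)=0)\}\cup\{\forall x\,(g(x)\neq 0):g\in\Theta\}\cup\Delta(K)$ and proves consistency by compactness, where the engine is Hilbertianity of infinite finitely generated fields together with Jarden's theorem that almost all $\sigma\in Gal(k')$ have pseudofinite fixed field, plus an openness argument to avoid roots of the finitely many $g_i$; and it is exactly the choice of $\alpha$ of \emph{minimal} (hence prime) degree that guarantees adjoining a root of $f$ is compatible with avoiding all roots of $\Theta$. No substitute for this construction appears in your proposal, so as it stands the converse direction is a plan rather than a proof.
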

To prove Theorem \ref{mc}, we use the elementary invariants given by Cherlin-van den Dries-Macintyre \cite{CDMM} for the theory of pseudo-algebraically closed fields (using a model theory for $G(K)$ dual to that of $K$) generalizing Ax's work for the pseudo-finite case \cite{ax}.  

By the Lang-Weil estimates or the theorem of Andr\'{e} Weil on the Riemann hypothesis for curves over finite fields, any infinite algebraic extension $K$ of $\F_p$ is pseudo-algebraically closed (see \cite[Corollary 11.2.4]{FJ} for details). For such a $K$, $Gal(K)$ is pro-cyclic (see \cite[chapter 1]{FJ}). Thus Theorem \ref{p-adic} follows from Theorem \ref{mc} and Theorem \ref{mc-hens}.

\section{Proof of Theorem \ref{mc-hens}}

Let $K$ be a valued field. We shall denote the valuation on $K$ by $v_K$ or $v$, the ring of integers 
of $K$ by $\cO_K$, the valuation ideal by $\cM_K$, and the value group by $\Gamma_K$ or $\Gamma$. We 
denote the residue field by $k$.

Assume throughout that $K$ has characteristic zero and residue characteristic $p>0$. 
We take the smallest 
convex subgroup $\Delta$ of $\Gamma_K$ containing 
$v(p)$ and consider the quotient $\Gamma_K/\Delta$ with the ordering coming from convexity of $\Delta$ (see \cite{schilling}). $K$ carries a valuation which is the composition of $v_K$ with the canonical surjection $\Gamma_K \rightarrow \Gamma_K/\Delta$. This valuation will be denoted by $\dot{v}: K\rightarrow 
\Gamma_K/\Delta\cup \{\infty\}$ and is called the {\it coarse valuation corresponding to $v$}. We denote the valued field $(K,\dot{v})$ by $\dot{K}$. The valuation ring of $\cO_{\dot{K}}$ of $\dot{v}$ is the set $\{x\in K: \exists \delta \in \Delta~(v(x)\geq \delta)\}$. It is also the smallest overring of $\cO_K$ in which $p$ becomes a unit, or the localization of $\cO_K$ with respect to the multiplicatively closed set $\{p^m: m\in \N\}$. The maximal ideal $\cM_{\dot{K}}$ of $\dot{v}$ is the set $\{x\in K: \forall \delta~ (v(x)>\delta)\}$. Clearly 
 $\cM_{\dot{K}}\subseteq \cM_K$. The residue field of $K$ with respect to the coarse valuation $\dot{v}$ 
has characteristic zero, and is called the {\it core field} of $K$ corresponding to $v$. It is denoted by $K^{\circ}$. The core field carries a valuation $v_0$ defined by $v_0(x+\cM_{\dot{K}})=v(x)$. The valuation $v_0$ has value group $\Delta$, valuation ring $\cO_K/\cM_{\dot{K}}$, maximal ideal  $\cM_K/\cM_{\dot{K}}$, and residue field $k$. The residue degree of $K$ is defined to be the dimension over $\F_p$ of the residue field $k$.

% $(\cO_K/\cM_{\dot{K}})/(\cM_K/\cM_{\dot{K}})\cong \cO_K/\cM_K\cong k$, the residue field of $K$. 

\begin{lem}\label{index-lem} The ramification index and residue degree of $K$ and the core field $K^{\circ}$ 
are the same.\end{lem}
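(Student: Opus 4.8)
The plan is to treat the residue degree and the ramification index separately, reducing each to facts about the core field $\Kcore$ and its valuation $v_0$ that have already been recorded above. For the residue degree there is essentially nothing to do: the residue field of $(K,v)$ is $k$ by assumption, and it was observed in the construction of $v_0$ that the residue field of $(\Kcore,v_0)$ is again $k$. Since the residue degree is by definition the dimension of this common residue field over $\F_p$, the two residue degrees agree at once.

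For the ramification index the one point to isolate first is that $v_0(p)=v(p)$: indeed $p$ is a unit in $\cO_{\dot K}$, so its image in $\Kcore$ is nonzero, and $v_0(p+\cM_{\dot K})=v(p)$ by the definition of $v_0$. Since the value group of $v_0$ is $\Delta$, the ramification index of $\Kcore$ is the cardinality of the set $B=\{\gamma\in\Delta: 0<\gamma\le v(p)\}$, while that of $K$ is the cardinality of $A=\{\gamma\in\Gamma_K: 0<\gamma\le v(p)\}$. I would then show $A=B$. The inclusion $B\subseteq A$ is immediate from $\Delta\subseteq\Gamma_K$. For $A\subseteq B$, take $\gamma\in\Gamma_K$ with $0<\gamma\le v(p)$; since $0\in\Delta$, $v(p)\in\Delta$ (recall $\Delta$ is the smallest convex subgroup containing $v(p)$), and $\gamma$ lies between these two elements, convexity of $\Delta$ forces $\gamma\in\Delta$, hence $\gamma\in B$. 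Thus $A=B$, and the ramification indices coincide.

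There is no genuine obstacle here: all the content sits in the definition of $\Delta$ as a convex subgroup together with the identity $v_0(p)=v(p)$. The only things worth verifying with care are precisely that $v_0(p)=v(p)$ and that the value group of $v_0$ is $\Delta$, both of which are part of the construction of the coarse valuation; once these are in hand the equality $A=B$ is a one-line convexity argument.
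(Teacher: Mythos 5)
Your proof is correct, but it is worth noting that the paper does not actually prove this lemma at all: it simply cites Prestel--Roquette \cite[pp.~27]{PR} and moves on. What you have written is a self-contained verification from the definitions given in the paper, and it holds up. The residue-degree half is indeed immediate, since the construction of $v_0$ exhibits its residue field as $\cO_K/\cM_K = k$, the same field whose $\F_p$-dimension defines the residue degree of $K$. For the ramification half, your two key verifications are sound: the image of $p$ in $\Kcore$ is nonzero because $v(p) \in \Delta$ makes $p$ a unit of $\cO_{\dot{K}}$, and $v_0(p + \cM_{\dot{K}}) = v(p)$ by the defining formula for $v_0$; then the equality of the two counting sets $A = \{\gamma \in \Gamma_K : 0 < \gamma \le v(p)\}$ and $B = \{\gamma \in \Delta : 0 < \gamma \le v(p)\}$ is exactly the convexity of $\Delta$ applied between $0$ and $v(p)$, both of which lie in $\Delta$. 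The one point you lean on without proof --- that the value group of $v_0$ is precisely $\Delta$ rather than some larger set --- is asserted in the paper's construction, so citing it is legitimate; if one wanted full self-containment, it too follows from convexity (any $v(x)$ with $x \in \cO_{\dot{K}} \setminus \cM_{\dot{K}}$ is squeezed between two elements of $\Delta$). In short, your argument buys a direct, elementary proof where the paper only offers an external reference, at the cost of no additional machinery.
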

\begin{proof} For a proof see \cite[pp. 27]{PR}.
\end{proof}

\

We recall that a sequence $\{a_n\}_{n\in \omega}$ of elements of a valued field is called $\omega$-{\it pseudo-convergent} if for some integer $n_0$, we have $v(a_m-a_n)>v(a_n-a_k)$ for all $m>n>k>n_0$. An element $a\in K$ is called a pseudo-limit of the sequence $\{a_n\}$ if for some integer $n_0$ we have $v(a-a_n)>v(a-a_k)$ for all $n>k>n_0$. The field $K$ is called $\omega$-pseudo-complete if every $\omega$-pseudo-convergent sequence of length $\omega$ has a pseudo-limit in the field. We shall use the following lemma.
\begin{lem}\label{psc-lem} An $\aleph_1$-saturated valued field is $\omega$-pseudo-complete.\end{lem}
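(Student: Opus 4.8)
The plan is to exhibit a pseudo-limit as a realization of a suitable partial type over the countably many parameters $\{a_n : n\in\omega\}$, and then to quote $\aleph_1$-saturation. First I would fix an $\omega$-pseudo-convergent sequence $\{a_n\}_{n\in\omega}$ with threshold $n_0$ and introduce the gauge $\gamma_n:=v(a_{n+1}-a_n)$ for $n>n_0$. Two elementary consequences of the definition are needed: that $(\gamma_n)_{n>n_0}$ is strictly increasing (take $m=n+2$, middle index $n+1$, bottom index $n$ in the pseudo-convergence inequality), and that $v(a_m-a_n)=\gamma_n$ whenever $m>n>n_0$ (write $a_m-a_n$ as the telescoping sum of the $a_{j+1}-a_j$, whose valuations $\gamma_j$ strictly increase, so the minimum $\gamma_n$ is attained exactly once).

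Next I would write down the partial type in a single field variable $x$,
\[
 p(x)=\bigl\{\, v(x-a_{n+1})>v(x-a_n)\;:\; n>n_0 \,\bigr\}.
\]
Each condition is a first-order formula in the natural language of valued fields whose only parameters are the field elements $a_n,a_{n+1}$, so $p$ is a type over the countable set $\{a_n:n\in\omega\}$. A realization $a$ of $p$ makes $v(a-a_n)$ strictly increasing for $n>n_0$, which is exactly the defining property of a pseudo-limit; hence realizing $p$ in $K$ suffices.

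The remaining point is finite satisfiability of $p$ in $K$ itself, and here the sequence provides its own witnesses. Given finitely many of the conditions, involving indices at most $N$, I would take $x=a_m$ for any $m>N+1$: by the two facts above, $v(a_m-a_{n+1})=\gamma_{n+1}>\gamma_n=v(a_m-a_n)$ for every relevant $n$, so $a_m$ satisfies the chosen finite subset. Thus $p$ is finitely satisfiable in $K$, hence consistent with the complete theory of $K$ over these parameters, and since the parameter set is countable, $\aleph_1$-saturation yields a realization $a\in K$, which is the desired pseudo-limit.

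I do not expect a serious obstacle: the content is the correct choice of type together with the observation that the tail of the sequence realizes every finite fragment of it. The only things to be careful about are that the conditions are genuinely first-order over $K$ (they are, being comparisons of valuations of field elements) and that the parameter set stays countable, so that $\aleph_1$-saturation indeed applies.
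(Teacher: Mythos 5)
Your proof is correct, and it supplies in full the standard saturation argument that the paper itself leaves as ``Obvious'': the pseudo-limit condition is a countable partial type over the countable parameter set $\{a_n\}$, finitely satisfied by tail elements of the sequence, hence realized by $\aleph_1$-saturation. The two preliminary facts you isolate (strict increase of $\gamma_n$ and $v(a_m-a_n)=\gamma_n$ for $m>n>n_0$) are exactly what makes the finite-satisfiability step work, so there is no gap.
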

\begin{proof}
Obvious.
\end{proof}

\

We shall need the following result on existential definability of valuation rings.

\begin{lem}\label{bel} Let $K$ be a Henselian valued field of characteristic zero, residue characteristic $p>0$, and ramification index  $e>0$. Let $n>e$ be an integer that is  not 
divisible by $p$. Then the valuation ring $\cO_K$ is existentially definable by the formula $\exists y~(1+px^n=y^n)$.\end{lem}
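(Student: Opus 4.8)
The plan is to show that the set $S=\{x\in K:\exists y\,(1+px^{n}=y^{n})\}$ cut out by the given formula is exactly $\cO_K$, treating the two inclusions separately. For $\cO_K\subseteq S$, I would fix $x\in\cO_K$ and look for a root of $f(Y)=Y^{n}-(1+px^{n})$ near $Y=1$. Since $v(x)\ge 0$ we have $v(px^{n})=v(p)+n\,v(x)\ge v(p)>0$, so $f(1)=-px^{n}\in\cM_K$, while $f'(1)=n$ is a unit because $p\nmid n$ gives $v(n)=0$. Then $v(f(1))>0=2\,v(f'(1))$, so the Henselian hypothesis in its strong (Newton) form produces a root $y\in\cO_K$ with $v(y-1)>0$; in particular $y\in K$, whence $x\in S$. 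This direction is routine once one records that coprimality of $n$ and $p$ makes the derivative a unit at $1$.

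For the reverse inclusion $S\subseteq\cO_K$, where the hypothesis $n>e$ does the work, I would argue by contraposition: assuming $v(x)<0$, I would show that $1+px^{n}$ is not an $n$-th power. Put $\mu=-v(x)>0$, so that $v(px^{n})=v(p)-n\mu$, and split on its sign. If $v(p)-n\mu\ge 0$, then $n\mu\le v(p)$, so $\mu,2\mu,\dots,n\mu$ are $n$ distinct elements of the segment $(0,v(p)]$ of $\Gamma_K$; but that segment has cardinality $e<n$, a contradiction. Hence necessarily $v(px^{n})<0$, and since $v(px^{n})<0=v(1)$ the valuation of the sum is $v(1+px^{n})=v(px^{n})=v(p)-n\mu$.

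Now suppose toward a contradiction that $y^{n}=1+px^{n}$. Taking valuations gives $n\,v(y)=v(p)-n\mu$, i.e.\ $v(p)=n\bigl(v(y)+\mu\bigr)\in n\,\Gamma_K$. Writing $v(p)=n\delta$, the positivity $v(p)>0$ forces $\delta>0$, and then $\delta,2\delta,\dots,n\delta=v(p)$ are again $n$ distinct elements of $(0,v(p)]$, once more contradicting $e<n$. So no such $y$ exists and $x\notin S$, which completes the contrapositive and hence the inclusion $S\subseteq\cO_K$.

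I expect the only genuine obstacle to be presenting the counting argument cleanly. Both alternatives for the sign of $v(px^{n})$ ultimately produce an arithmetic progression $\delta,2\delta,\dots,n\delta$ of length $n$ inside the interval $(0,v(p)]$, whose number of elements is by definition the ramification index $e$; the strict inequality $n>e$ is precisely what makes this impossible. The coprimality $p\nmid n$ plays the complementary role of unlocking the easy inclusion through Hensel's lemma. The degenerate cases $x=0$ (take $y=1$) and $v(x)=0$ need no separate treatment, as they already fall under $v(x)\ge 0$ and hence under the inclusion $\cO_K\subseteq S$.
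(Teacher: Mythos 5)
Your proof is correct and takes essentially the same route as the paper's: Hensel's lemma applied to $f(y)=y^n-px^n-1$ at $y=1$ (with $p\nmid n$ making $f'(1)$ a unit) gives $\cO_K$ inside the defined set, and for the converse the observation that $y^n=1+px^n$ with $v(x)<0$ forces $v(p)\in n\Gamma_K$, which is impossible when $n>e$. If anything, your counting argument with the $e$-element set $(0,v(p)]$ is more careful than the paper's version, which tacitly normalizes $v(p)=e$ and concludes ``$n$ divides $e$, contradiction'' in one line, so your write-up fills in exactly the details (why $v(px^n)<0$, and why divisibility fails in a general value group) that the paper glosses over.
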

\begin{proof} This is proved in \cite[Lemma 1.5,pp. 4]{belair} under the assumption of a finite residue field but the same proof goes through in the more general case as follows. Let  $x\in \cO_K$. Let $f(y):=y^n-px^n -1$. 
Then $v(f(1))>2v(f'(1))$, so $f$ has a root in $K$ by Hensel's Lemma. Conversely,  suppose $1+px^n$ is an $n$th power. If $v(x)<0$, then $v(px^n)<0$, and so $v(y)<0$, hence $nv(y)=e+nv(x)$, thus $n$ divides $e$, 
contradiction to the choice of $n$.\end{proof}

{\bf Note} The existential definition above is uniform once one fixes $p$ and a finite bound on the ramification index $e$. 
In particular, for any extension $K$ of $\Q_p$ with ramification index $e$, the valuation ring of $K$ is defined 
by an existential formula of the language of rings that depends only on $p$ and $e$, and not $K$.

\begin{cor}\label{bel-cor} Suppose that $K_1 \subseteq K_2$ is an extension of Henselian valued fields of 
characteristic $0$ and residue characteristic $p>0$, and whose value groups are $\Z$-groups. Suppose that the index of ramification of $K_1$ and $K_2$ is $e$ where $0<e<\infty$. Then  
$$\cO_{K_2} \cap K_1=\cO_{K_1}.$$
 \end{cor}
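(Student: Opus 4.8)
The plan is to reduce the equality $\cO_{K_2}\cap K_1=\cO_{K_1}$ to the uniform existential definability of the valuation ring furnished by Lemma \ref{bel}. The two valued fields $K_1\subseteq K_2$ are Henselian of characteristic zero, residue characteristic $p>0$, and share the same finite ramification index $e$ with $0<e<\infty$; moreover their value groups are $\Z$-groups. These hypotheses are precisely what is needed to invoke Lemma \ref{bel}.

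First I would fix an integer $n>e$ not divisible by $p$; such an $n$ exists since there are infinitely many integers prime to $p$ exceeding $e$. By Lemma \ref{bel} applied to $K_1$, the ring $\cO_{K_1}$ is exactly the set of $x\in K_1$ satisfying $\exists y\,(1+px^n=y^n)$, and by Lemma \ref{bel} applied to $K_2$, the ring $\cO_{K_2}$ is exactly the set of $x\in K_2$ satisfying the same formula. The key point, emphasized in the Note following Lemma \ref{bel}, is that the \emph{same} formula $\varphi(x):=\exists y\,(1+px^n=y^n)$ defines the valuation ring in both fields, because the definition depends only on $p$ and on the common bound $e$ on the ramification index, not on the particular field.

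The two inclusions now follow directly. For $x\in\cO_{K_1}$ we have $x\in K_1$ and $K_1\models\varphi(x)$; since $K_1\subseteq K_2$ is a substructure in the language of rings, the witness $y\in K_1$ to the existential formula lies in $K_2$ as well, so $K_2\models\varphi(x)$, whence $x\in\cO_{K_2}$, and as $x\in K_1$ also $x\in\cO_{K_2}\cap K_1$. Conversely, if $x\in\cO_{K_2}\cap K_1$, then $x\in K_1$ and $K_2\models\varphi(x)$; but $\varphi$ defines $\cO_{K_2}$ in $K_2$, and restricting attention to the element $x\in K_1$, the characterization of Lemma \ref{bel} for $K_1$ shows that membership of $x$ in $\cO_{K_1}$ is governed by the very same equation $1+px^n=y^n$.

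The only delicate point is the backward inclusion: from $x\in K_1$ with a witness $y\in K_2$ one must produce a witness in $K_1$, or else argue directly that $x\in\cO_{K_1}$. The cleanest route is not to transfer witnesses but to use that $\varphi$ defines $\cO_{K_1}$ in $K_1$ and $\cO_{K_2}$ in $K_2$, combined with the fact that the valuation $v_{K_2}$ restricts to $v_{K_1}$ on $K_1$ (up to the identification of $\Gamma_{K_1}$ with a subgroup of $\Gamma_{K_2}$). Thus for $x\in K_1$, $x\in\cO_{K_2}$ means $v_{K_2}(x)\geq 0$, which by the restriction of the valuation is the same as $v_{K_1}(x)\geq 0$, i.e.\ $x\in\cO_{K_1}$. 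This makes both inclusions transparent and shows that the existential definability is what does the work; the main obstacle, ensuring the witness or valuation behaves coherently across the extension, is dissolved precisely by the uniformity of the formula.
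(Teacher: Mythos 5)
Your forward inclusion is fine: an existential formula true of $x$ in $K_1$ remains true in $K_2$, so $\cO_{K_1}\subseteq \cO_{K_2}\cap K_1$. But your backward inclusion has a genuine gap --- in fact it is circular. You correctly identify that witnesses in $K_2$ cannot be pulled down to $K_1$, and then resolve this by invoking ``the fact that the valuation $v_{K_2}$ restricts to $v_{K_1}$ on $K_1$.'' That fact is not a hypothesis; it is exactly what the corollary asserts. In the paper, $K_1\subseteq K_2$ is merely an extension of fields (an embedding of models of $Th(K)$ in the language of rings), each carrying its own Henselian valuation; the equality $\cO_{K_2}\cap K_1=\cO_{K_1}$ \emph{is} the statement that the two valuations are compatible, and this is precisely how the corollary is used in the proof of Theorem 1, to upgrade a ring embedding to an embedding of valued fields. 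If compatibility of the valuations were assumed, the conclusion would hold by definition and there would be nothing to prove.

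The missing idea is the paper's treatment of the maximal ideal. Using the hypotheses that the value group is a $\Z$-group and that the ramification index is $e$ (so $v(p)=e$), one shows
$$\cM_{K}=\{x\in K:\ x^e p^{-1}\in \cO_K\},$$
which, combined with Lemma 3, makes $\cM_{K_1}$ and $\cM_{K_2}$ definable by one and the same \emph{existential} ring formula. Now suppose $\beta\in \cO_{K_2}\cap K_1$ but $\beta\notin \cO_{K_1}$. Since $\cO_{K_1}$ is a valuation ring, $\beta^{-1}\in \cO_{K_1}$, indeed $\beta^{-1}\in \cM_{K_1}$. Upward transfer of the existential formulas (the direction that \emph{does} work) gives $\beta^{-1}\in \cO_{K_2}$, so $\beta$ is a unit of $\cO_{K_2}$, and also $\beta^{-1}\in \cM_{K_2}$ --- a contradiction. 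So the downward direction is never argued by pulling witnesses down or by restricting valuations; it is converted into two upward transfers applied to $\beta^{-1}$. Your proof needs this (or some substitute) to be complete.
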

\begin{proof} First note that given a valued field $K$ of residue characteristic $p>0$ whose value group is a $\Z$-group and 
which has ramification index $e$, we have that
$$\cM_{K}=\{x\in K: x^e p^{-1} \in \cO_K\}.$$
Indeed, suppose that $x\in \cM_K$. Then $ev(x)-e\geq 0$, thus since $v(p)=e$, we deduce that 
$x^ep^{-1}\in \cO_K$. Conversely, suppose that $x\in K$ satisfies the condition $x^e p^{-1} \in \cO_K$. Then 
$ev(x)-e\geq 0$, hence $ev(x)\geq e$, so $v(x)\geq 1$. 

From this observation and the existential definability of $\cO_{K_1}$ and $\cO_{K_2}$ by the same formula given 
by Lemma \ref{bel}, we deduce that the maximal ideals $\cM_{K_1}$ and $\cM_{K_2}$ are definable by the same 
existential formula (of the language of rings). 

Now we can complete the proof of the Corollary. From the existential definability of $\cO_{K_1}$ and $\cO_{K_2}$ by the 
same formula we deduce that $\cO_{K_1} \subseteq \cO_{K_2} \cap K_1$. For the other direction, suppose that 
there is an element $\beta \in K_1 \cap \cO_{K_2}$ but $\beta \notin \cO_{K_1}$. Then $\beta^{-1} \in \cO_{K_1}$, 
hence $\beta^{-1} \in \cO_{K_2}$. Thus $\beta$ is a unit in $\cO_{K_2}$. From $\beta \notin \cO_{K_1}$ we deduce that 
$\beta^{-1} \in \cM_{K_1}$, so $\beta^{-1} \in \cM_{K_2}$, contradiction. This proves the corollary. 
\end{proof}

We can now give the proof of Theorem \ref{mc-hens}. Let $K_1 \subseteq  K_2$ be an embedding of models of $Th(K)$. By Corollary \ref{bel-cor}, this is an embedding of valued fields. Thus there is a natural inclusion of the residue field  (resp.~ value group)  of $K_1$ into the residue field (resp.~value group) of $K_2$. We make a series of reductions.

\

{\bf Step 1}

We may assume that $K_1$ and $K_2$ are $\aleph_1$-saturated. 
Indeed, we can form ultrapowers $K_1^{U}$ and $K_2^{U}$ of $K_1$ and $K_2$, for a non-principal ultrafilter $U$. 
If we know that $K_1^U$ is an elementary substructure of $K_2^U$, then since $K_i$ is an elementary substructure of 
$K_i^U$ for $i=1,2$, we deduce that $K_1$ is an elementary substructure of $K_2$. 

\

{\bf  Step 2} 

It suffices to prove that the core field $K_1^{\circ}$ is an elementary substructure of the core field $K_2^{\circ}$. To see this, note that since the coarse valued fields  $\dot{K_1}$ and $\dot{K_2}$ have characteristic zero residue fields $K_1^{\circ}$ and $K_2^{\circ}$ respectively, and divisible torsion-free abelian value groups, and the theory of divisible torsion-free abelian groups is 
model-complete, by the work of Ax-Kochen-Ershov as spelled out by Ziegler \cite{ziegler-thesis}, we deduce that the embedding of $K_1$ in $K_2$ is elementary provided the embedding of $K_1^{\circ}$ into $K_2^{\circ}$ is elementary.

\

{\bf Step 3} 

We prove the embedding of $K_1^{\circ}$ into $K_2^{\circ}$ is elementary. Since the fields $K_1$ and $K_2$ are $\aleph_1$-saturated, by Lemma \ref{psc-lem} they are $\omega$-pseudo-complete. Thus the valued fields $K_1^{\circ}$ and $K_2^{\circ}$ are also $\omega$-pseudo-complete (since the map $\Gamma \rightarrow \Gamma/\Delta$ is order-preserving). However, these fields are valued in $\Delta$ which is canonically isomorphic to $\Z$. 
Thus $K_1^{\circ}$ and $K_2^{\circ}$ are Cauchy complete. By Lemma \ref{index-lem}, the ramification index of $K_1^{\circ}$ and $K_2^{\circ}$ is the same as the ramification 
index of $K_1$ and $K_2$ which equals the ramification index of $K$ which is $e$. 

By the structure theorem for complete fields with ramification index $e$ 
(see \cite[Theorem 4,pp.37]{serre-local}), 
$K_1^{\circ}$ and $K_2^{\circ}$ are respectively finite extensions of degree $e$, obtained by adjoining a uniformizing element, of the fields $W(k_1)$ and $W(k_2)$ which are fraction fields of the rings of Witt vectors of $k_1$ and $k_2$ respectively, where $k_1$ and $k_2$ are the residue fields of $K_1^{\circ}$ and $K_2^{\circ}$ (which coincide with the residue fields of $(K_1,v_{K_1})$ and $(K_2,v_{K_2})$ respectively).

Thus $K_1^{\circ}=W(k_1)(\pi)$ for some uniformizing element $\pi \in K_1^{\circ}$. 
$\pi$ is the root of a polynomial 
$$E(x):=x^e+c_{x}^{e-1}+\dots+c_e$$
that is Eisenstein over $W(k_1)$. So  
$$c_j\in \cM_{W(k_1)}$$ for all $j$ and 
$$c_e\in \cM_{W(k_1)}-\cM_{W(k_1)}^2.$$

\begin{claim} $E(x)$ is Eisenstein over $W(k_2)$ and $K_2^{\circ}=W(k_2)(\pi)$.
\end{claim}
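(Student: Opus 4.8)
The plan is to prove the claim by analysing where the Eisenstein coefficients $c_j$ live, using the fact that $W(k_1)$ embeds into $W(k_2)$ functorially in the residue fields. First I would observe that the inclusion $k_1 \subseteq k_2$ of residue fields (coming from the embedding $K_1 \subseteq K_2$ via Corollary \ref{bel-cor}) induces a canonical embedding of Witt vector rings $W(k_1) \hookrightarrow W(k_2)$, which is compatible with the valuations: a uniformizer of $W(k_1)$ (namely $p$) remains a uniformizer of $W(k_2)$, and the maximal ideals satisfy $\cM_{W(k_2)} \cap W(k_1) = \cM_{W(k_1)}$. This is the Witt-vector analogue of Corollary \ref{bel-cor}, and it is really just the statement that $W(\cdot)$ is a functor sending unramified extensions of residue fields to unramified extensions of the corresponding complete discrete valuation rings, with $v(p) = 1$ in both.

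Next I would transport the Eisenstein conditions across this embedding. Since $E(x) = x^e + c_1 x^{e-1} + \dots + c_e$ has all its coefficients $c_j \in W(k_1)$, and these same elements are now viewed inside $W(k_2)$, the conditions $c_j \in \cM_{W(k_1)}$ for all $j$ and $c_e \in \cM_{W(k_1)} \setminus \cM_{W(k_1)}^2$ should transfer verbatim. Concretely: from $c_j \in \cM_{W(k_1)} \subseteq \cM_{W(k_2)}$ I get that every non-leading coefficient lies in $\cM_{W(k_2)}$; and from $c_e \in \cM_{W(k_1)} \setminus \cM_{W(k_1)}^2$, i.e. $v(c_e) = 1$ in the valuation of $W(k_1)$, the compatibility of valuations gives $v(c_e) = 1$ in $W(k_2)$ as well, so $c_e \in \cM_{W(k_2)} \setminus \cM_{W(k_2)}^2$. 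Hence $E(x)$ is Eisenstein over $W(k_2)$.

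It then remains to identify $K_2^{\circ}$ with $W(k_2)(\pi)$. Here I would use the structure theorem invoked just above the claim: $K_2^{\circ}$ is a totally ramified extension of degree $e$ of $W(k_2)$, obtained by adjoining some uniformizer. Since $\pi \in K_1^{\circ} \subseteq K_2^{\circ}$ is a root of the Eisenstein polynomial $E$ over $W(k_2)$, the element $\pi$ is itself a uniformizer of $K_2^{\circ}$, and $W(k_2)(\pi)$ is a totally ramified extension of $W(k_2)$ of degree exactly $e = \deg E$ (an Eisenstein polynomial is irreducible, so $[W(k_2)(\pi):W(k_2)] = e$). Comparing degrees, $W(k_2)(\pi) \subseteq K_2^{\circ}$ and both have degree $e$ over $W(k_2)$, forcing equality $K_2^{\circ} = W(k_2)(\pi)$.

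The main obstacle, I expect, is justifying cleanly that the embedding $K_1^{\circ} \hookrightarrow K_2^{\circ}$ restricts to the canonical functorial embedding $W(k_1) \hookrightarrow W(k_2)$ compatibly with valuations — in other words, that the copy of $W(k_1)$ sitting inside $K_1^{\circ}$ maps into the distinguished copy of $W(k_2)$ inside $K_2^{\circ}$, rather than into some other unramified subring. This comes down to the uniqueness of the maximal unramified (equivalently, absolutely unramified Witt) subring of a complete discretely valued field of mixed characteristic with given residue field, together with the fact that our embedding induces the inclusion $k_1 \subseteq k_2$ on residue fields. Once that functoriality and the valuation-compatibility $\cM_{W(k_2)} \cap W(k_1) = \cM_{W(k_1)}$ are pinned down, the transfer of the Eisenstein conditions and the degree count are both immediate.
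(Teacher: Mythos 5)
You give a correct proof, but by a genuinely different route than the paper's. The paper's own argument is model-theoretic: it encodes the Eisenstein conditions as existential ring-language statements ($c_j \in \cM_{W(k_1)}$ iff $c_j^e p^{-1} \in \cO_{W(k_1)}$, and $c_e$ is a uniformizer iff both $c_e^e p^{-1} \in \cO_{W(k_1)}$ and $c_e^{-e} p \in \cO_{W(k_1)}$), observes via Lemma \ref{bel} that $\cO_{W(k_1)}$ and $\cO_{W(k_2)}$ are defined by the \emph{same} existential formula (both fields being absolutely unramified with uniformizer $p$), and then uses upward persistence of existential formulas along the ring embedding --- the same device as in Corollary \ref{bel-cor}. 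You instead obtain the transfer from Witt-vector functoriality: the embedding $W(k_1)\hookrightarrow W(k_2)$ induced by $k_1 \subseteq k_2$ is automatically valuation-compatible, so $v(c_j)\geq 1$ and $v(c_e)=1$ carry over verbatim; the concluding degree count is identical in both arguments. As for what each approach buys: the paper's definability trick extracts valuation compatibility from a bare ring embedding, in keeping with the paper's overall strategy, whereas your argument is purely algebraic and confronts head-on the point the paper leaves implicit --- namely that the image of $W(k_1)$ under $K_1^{\circ}\hookrightarrow K_2^{\circ}$ actually lands inside $W(k_2)$ and is the functorial copy. Note that the paper needs this fact too (it is used silently when concluding $c_j \in \cO_{W(k_2)}$ and when regarding $\pi$ as a root of an Eisenstein polynomial with coefficients in $W(k_2)$), and your resolution --- uniqueness of the Teichm\"uller/Witt coefficient ring in a complete discretely valued field of mixed characteristic with perfect residue field, applied to the valuation-compatible embedding $K_1^{\circ}\hookrightarrow K_2^{\circ}$ --- is the standard and correct way to justify it, and is available in the paper's setting, where the residue fields of interest are algebraic over $\F_p$, hence perfect. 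So your proposal is sound, and on this compatibility point it is actually more explicit than the paper itself.
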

\begin{proof}  
The condition that $c_j$ is in the maximal ideal $\cM_{W(k_1)}$ is equivalent to the condition that 
$$c_j^ep^{-1}\in \cO_{W(k_1)}$$ 
since this condition means that $ev(c_j)-e\geq 0$, that is $ev(c_j)\geq e$, which is $v(c_j)\geq 1$; and the condition that $c_e$ is a uniformizer, i.e. that it lies in 
$\cM_{W(k_1)}$ and does not lie in $\cM_{W(k_1)}^2$, is equivalent to the conjunction of the 
statements $c_e^ep^{-1} \in \cO_{W(k_1)}$ and $c_e^{-e}p\in \cO_{W(k_1)}$. Indeed, the latter condition is equivalent to $-ev(c_e)+e\geq 0$, i.e. $-ev(c_e)\geq -e$, i.e., $v(c_e)\leq 1$.

By Lemma \ref{bel}, the valuations on $W(k_1)$ and $W(k_2)$ are existentially definable by the same formula since these fields are absolutely unramified and $p$ is 
a uniformizer in both. We deduce from the preceding argument that 
$$c_j\in \cO_{W(k_2)}$$
for all $j$ and 
$$c_e\in \cM_{W(k_2)}-\cM_{W(k_2)}^2.$$
Therefore $E(x)$ is an Eisenstein polynomial over $W(k_2)$, and $\pi$ remains a uniformizer in $K_2^{\circ}$. 

Now $K_1^{\circ}$ is an extension of degree $e$ of $W(k_1)$ generated by $\pi$. 
As $\pi$ remains a uniformizer in $K_2^{\circ}$ and $E(x)$ is Eisenstein over $W(k_2)$, the extension 
$W(k_2)(\pi)$ has degree $e$ over in $W(k_2)$. But $\pi \in K_2^{\circ}$ and $K_2^{\circ}$ has degree $e$ over $W(k_2)$ as well, so we deduce that
$$K_2^{\circ}=W(k_2)(\pi).$$
\end{proof}

\begin{claim} The embedding of $W(k_1)$ in $W(k_2)$ is elementary.\end{claim}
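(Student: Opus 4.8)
The plan is to factor the problem through the residue fields and invoke the model-completeness hypothesis on $\mathrm{Th}(k)$. First I would show that the induced field embedding $k_1\hookrightarrow k_2$ is itself \emph{elementary}. Since $K_1$ and $K_2$ are both models of the complete theory $\mathrm{Th}(K)$, we have $K_1\equiv K_2\equiv K$. By Lemma~\ref{bel} and the identity $\cM_K=\{x:x^ep^{-1}\in\cO_K\}$ obtained in the proof of Corollary~\ref{bel-cor}, both $\cO_K$ and $\cM_K$ are existentially definable by formulas of the language of rings depending only on $p$ and $e$; hence the residue field $k=\cO_K/\cM_K$ is uniformly interpretable in the valued field. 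Consequently $k_1\equiv k_2\equiv k$, so $k_1,k_2\models\mathrm{Th}(k)$, and since $\mathrm{Th}(k)$ is model-complete by hypothesis the embedding is elementary: $k_1\preceq k_2$.

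It then remains to lift $k_1\preceq k_2$ to $W(k_1)\preceq W(k_2)$. Recall that $W(k_1)$ and $W(k_2)$ are complete, absolutely unramified discrete valuation fields of mixed characteristic $(0,p)$, with value group $\Z$ generated by $v(p)=1$, perfect residue fields $k_1\subseteq k_2$, and that the inclusion $W(k_1)\hookrightarrow W(k_2)$ is the isometric embedding of valued fields induced functorially by $k_1\hookrightarrow k_2$. I would apply the Ax--Kochen--Ershov principle in the unramified mixed-characteristic case: for henselian valued fields of residue characteristic $p$ that are absolutely unramified, with perfect residue fields and value group $\Z$, an embedding is elementary as valued fields once the induced embeddings of residue field and of value group are elementary. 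The value-group hypothesis is automatic, since both groups are $\Z$ and $\Z\preceq\Z$, and the residue-field hypothesis is exactly $k_1\preceq k_2$. This gives $W(k_1)\preceq W(k_2)$ as valued fields; as $\cO_{W(k_i)}$ is existentially definable by the same ring-language formula (both fields being absolutely unramified with uniformizer $p$), the embedding is elementary in the language of rings as well.

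The main obstacle is precisely this mixed-characteristic Ax--Kochen--Ershov step, since the usual AKE transfer fails when the residue characteristic is positive. What rescues the argument is absolute unramifiedness together with perfectness of the residue field: via the Teichm\"uller coordinates $\sum_{i\ge n}[a_i]p^i$, every element of $W(k)$ is expressed canonically in terms of residue-field data, so that the truth of a first-order formula can be analysed, through Hensel's Lemma and these coordinates, entirely in terms of $k$ and the fixed value group $\Z$. Concretely, if a direct appeal to the literature is to be avoided, I would pass to $\aleph_1$-saturated elementary extensions of $W(k_1)$ and $W(k_2)$ and run a back-and-forth that matches Teichm\"uller expansions, extending the matching coefficient by coefficient using $k_1\preceq k_2$; the completeness of the discrete valuation guarantees that the resulting expansions converge and that the system closes up. This is where finite ramification, the $\Z$-group value group, and model-completeness of the residue field are all used simultaneously.
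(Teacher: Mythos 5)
Your opening step --- deducing $k_1 \preceq k_2$ from the uniform existential definability of $\cO$ and $\cM$ (hence interpretability of the residue field in the ring language) together with model-completeness of $\mathrm{Th}(k)$ --- is exactly the paper's first move, just spelled out in more detail. Where you diverge is the lifting step. Your primary route outsources it to an Ax--Kochen--Ershov embedding theorem for unramified henselian valued fields of mixed characteristic with perfect residue field; such a theorem does exist in the literature, and since both value groups are $\Z$ it gives $W(k_1)\preceq W(k_2)$ at once. This is legitimate, but it buys the conclusion from precisely the kind of result the paper is re-deriving by hand at this point. The paper instead stays self-contained: it notes that $k_1$ and $k_2$ are themselves $\aleph_1$-saturated (inherited from the Step 1 saturation of $K_1$ and $K_2$), so the countable subfields of $k_1$ and $k_2$ form a back-and-forth system; since every element of the genuine, complete field $W(k_i)$ is a series $\sum_i c_i p^i$ with countably many residue-field coefficients, this system lifts functorially to a back-and-forth system between $W(k_1)$ and $W(k_2)$ containing restrictions of the given embedding, which yields elementarity.

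Your self-contained fallback, as written, has a genuine wrinkle: you propose to pass to $\aleph_1$-saturated \emph{elementary extensions of $W(k_1)$ and $W(k_2)$} and match Teichm\"uller expansions coefficient by coefficient. But an elementary extension of $W(k_i)$ is no longer complete, and its value group is a proper $\Z$-group; elements of nonstandard valuation, or units whose putative expansions would need indices beyond the standard integers, are not determined by their standard Teichm\"uller coefficients, so coefficient-matching does not close the back-and-forth there --- making it work amounts to re-proving the unramified AKE theorem you were trying to avoid citing. The repair is to saturate at the residue level rather than at the Witt level, which is what the paper does: $k_1$ and $k_2$ are already $\aleph_1$-saturated, and the back-and-forth runs directly between the honest complete fields $W(k_1)$ and $W(k_2)$, where every element really does have a standard expansion with residue-field coefficients.
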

\begin{proof} Since $k_1$ and $k_2$ are residue fields of $K_1$ an $K_2$ for the valuation $v_K$, 
the embedding of $k_1$ in $k_2$ is elementary. Since $K_1$ and $K_2$ are $\aleph_1$-saturated, the fields $k_1$ and $k_2$ are 
also $\aleph_1$-saturated. Given any finitely many elements $a_1,\dots,a_m$ from $W(k_1)$, there is an 
isomorphism from $W(k_1)$ to $W(k_2)$ fixing $a_1,\dots,a_m$ since elements of $W(k_1)$ and $W(k_2)$ 
can be represented in the form $\sum_i c_i p^i$, where $c_i$ are from the residue field. The countable subfields of $k_1$ and $k_2$ form a back-and-forth system. This induces a back-and-forth system between $W(k_1)$ and $W(k_2)$, and it follows that the embedding of $W(k_1)$ into $W(k_2)$ is elementary\end{proof}

It remains to prove that the embedding of $K_1^{\circ}$ into $K_2^{\circ}$ is elementary. We interpret $W(k_i)(\pi)$ inside $W(k_i)$ (for $i=1,2$) in the usual way as follows. We identify 
$W(k_i)(\pi)$ with $W(k_i)^e$. On the $e$-tuples we define addition as the usual addition on vector spaces and multiplication by 
$$(x_1,\dots,x_e)\times (y_1,\dots,y_e)=(x_1I_e+x_2M_{\pi}+\dots+x_eM_{\pi}^{e-1}) \left( \begin{array}{c}
                                                                                                                                                                          y_1\\
                                                                                                                                                                           y_2\\
                                                                                                                                                                           \vdots\\
                                                                                                                                                                            y_e \end{array}\right)$$
where $I_e$ is the identity $e \times e$-matrix and $M_{\pi}$ is the $e \times e$-matrix of multiplication by $\pi$. Note that $M_{\pi}$ depends uniformly only on the coefficients $c_0,\dots,c_{e-1}$ of $E(x)$. Using Claim 2, we deduce that 
the embedding $W(k_1)(\pi) \rightarrow W(k_2)(\pi)$ is elementary. Thus $K_1^{\circ} \rightarrow K_2^{\circ}$ is elementary. The proof of Theorem \ref{mc-hens} is complete. 

\section{Model completeness for pseudo algebraically closed fields and proof of Theorem \ref{mc}}

Given a field $K$, the field of absolute numbers of $K$ is defined by $Abs(K):=k^{alg}\cap K$, where $k$ is the prime subfield of $K$. By a result of Ax \cite{ax}, two perfect pseudo-algebraically closed fields $K_1$ and $K_2$ 
whose absolute Galois groups are isomorphic to $\hat{\Z}$ are elementarily equivalent if and only if $Abs(K_1)=Abs(K_2)$. In other words, the theory of a such a field is determined by its absolute numbers $Abs(K)$ (equivalently by 
the polynomials $f \in k[x]$ that are solvable in $K$). 

Elementary invariants for pseudo-algebraically closed fields were given by Cherlin-van den Dries-Macintyre in \cite{CDMM},\cite{CDMM2} in terms of the language for profinite groups. In this case one has to preserve the degree of imperfection and 
the co-elementary theory defined as follows. The language CSIS for complete stratified inverse systems is a language with 
infinitely many sorts indexed by $\N$, each sort is equipped with the group operation. The $n$th sort describes properties 
for the set of groups in the inverse system which have cardinality $n$.  The language has in addition symbols for the connecting 
canonical maps between the groups in different sorts. Given any profinite group $G$, the set of 
finite quotients of $G$ with the canonical maps between them is a stratified inverse system. A coformula is a formula
of the language CSIS. A profinite group cosatisfies a cosentence if the associated stratified inverse system satisfies 
the cosentence. A cosentence or coformula has a translation to the language of fields. For details see \cite{CDMM}.

For any field $K$, the Galois diagram of $K$ is defined to be the theory
$$\{\exists \bar x,\bar y,\bar z,\bar t ~ (\varphi(\bar x,\bar y,\bar z,\bar t) \wedge \delta(\bar x,\bar y,\bar z,\bar t)):$$
$$\exists \bar a,\bar b,\bar c,\bar d \in Abs(K) ~ (K\models \varphi'(\bar a,\bar b,\bar c) \wedge 
\delta(\bar a,\bar b,\bar c,\bar d))\}$$
where $\delta(\bar x,\bar y, \bar z, \bar t)$ describes the isomorphism type of the field generated by $\bar x,\bar y, \bar z, \bar t$, 
and $\varphi$ is a coformula and $\varphi'$ its "translation" into the language of rings (cf. \cite{CDMM}).

We then have the following result. 
\begin{thm}\cite{CDMM}\label{cdm} Two pseudo-algebraically closed fields $K$ and $L$ are elementarily equivalent if and only if  $K$ and $L$ have the same characteristic and same degree of imperfection, and $\Delta(K)=\Delta(K)$.\end{thm}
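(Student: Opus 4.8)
\emph{The plan.} The forward direction is routine: if $K\equiv L$ then each of the listed invariants must agree, since each is first-order expressible in the language of rings. The characteristic is pinned down by the sentences $\underbrace{1+\cdots+1}_{n}\neq 0$; the degree of imperfection by sentences asserting the existence or non-existence of $p$-independent tuples of each given length; and the Galois diagram $\Delta(K)$ is, by its very definition, a set of existential ring-sentences (the translations $\varphi'$ of coformulas conjoined with the isomorphism-type formulas $\delta$), hence preserved under elementary equivalence. So $\Delta(K)=\Delta(L)$. The entire content lies in the converse: assuming $K$ and $L$ share characteristic and degree of imperfection and that $\Delta(K)=\Delta(L)$, I would show $K\equiv L$.

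\emph{Reduction to saturated models.} For the converse I would first pass to highly saturated elementary extensions $K^{\ast}\succeq K$ and $L^{\ast}\succeq L$; since $K\equiv K^{\ast}$, $L\equiv L^{\ast}$, and all three invariants are elementary, it suffices to prove $K^{\ast}\equiv L^{\ast}$. Two features make this workable. First, $K^{\ast}$ and $L^{\ast}$ are again pseudo-algebraically closed, the class of PAC fields being elementary. Second, the field of absolute numbers is rigid under elementary extension, $Abs(K^{\ast})=Abs(K)$ and $Abs(L^{\ast})=Abs(L)$, because the number of roots in the field of each polynomial over the prime field is a definable quantity; thus the ``small'' arithmetic data can be matched directly from the $\delta$-part of the Galois diagram.

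\emph{The Ax-style coding.} The heart of the argument is to show that the stated invariants axiomatize a \emph{complete} theory, equivalently that an arbitrary first-order ring sentence $\sigma$ is decided by them. The mechanism, following Ax's analysis of the pseudo-finite case and its extension in \cite{CDMM}, is to translate $\sigma$ into a Boolean combination of two kinds of data: (a) statements about the isomorphism type, as a field with distinguished Galois structure, of $Abs(K)$ --- exactly what the formulas $\delta$ record --- and (b) cosentences in the language CSIS about the system of finite quotients of $Gal(K)$ --- exactly what the coformulas $\varphi$ record. The crucial input is the PAC property: an existential sentence over a PAC field asserts that some variety has a rational point, and by PAC-ness this is governed by the existence of an absolutely irreducible component defined over the field together with a Galois-theoretic condition on its field of definition. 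This is where the duality between the model theory of $K$ and the model theory of the profinite group $Gal(K)$ converts field-theoretic quantifiers into co-statements about $Gal(K)$. Once $\sigma$ is so translated, the hypotheses $\Delta(K)=\Delta(L)$ (matching both (a) and (b)), together with the agreement of characteristic and imperfection degree, force $K^{\ast}$ and $L^{\ast}$ to agree on $\sigma$, giving $K^{\ast}\equiv L^{\ast}$ and hence $K\equiv L$.

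\emph{The main obstacle.} The difficult step is precisely this coding: proving that the co-elementary theory of $Gal(K)$ in the CSIS language, together with the isomorphism type of $Abs(K)$, captures \emph{all} first-order information about the PAC field $K$. Ax's original treatment covered the procyclic case $Gal(K)\cong\hat{\Z}$, where the co-theory carries no information and everything reduces to $Abs(K)$; the passage to an arbitrary profinite Galois group requires the full stratified-inverse-system formalism and careful attention to the uniformity of the translation. A second point demanding care is the treatment of imperfect fields, which is why the degree of imperfection must be tracked as a separate invariant rather than being absorbed into the Galois data.
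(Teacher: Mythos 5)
This statement is quoted by the paper from Cherlin--van den Dries--Macintyre \cite{CDMM} and is given no proof here (note also the paper's typo: the condition should read $\Delta(K)=\Delta(L)$, not $\Delta(K)=\Delta(K)$), so your attempt can only be measured against the argument of \cite{CDMM} itself. Your forward direction is fine: characteristic and degree of imperfection are elementary invariants, each member of the Galois diagram is (equivalent to) a ring sentence because the formula $\delta$ forces any witnessing tuple to be algebraic over the prime field, and your observation that $Abs(K^{\ast})=Abs(K)$ under elementary extension (by counting the roots of each polynomial over the prime field) is correct.

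The converse, however, is where the entire content lies, and your proposal does not prove it. The assertion that every ring sentence can be translated into a Boolean combination of (a) statements about the isomorphism type of $Abs(K)$ and (b) cosentences about the inverse system of $Gal(K)$ is not a step toward the theorem --- it \emph{is} the theorem, restated in syntactic form; invoking ``PAC-ness'' and the field/Galois-group duality as the crucial input supplies no mechanism for carrying it out. What \cite{CDMM} (following Ax and Jarden--Kiehne \cite{JK}) actually does at this point has a different shape: one passes to sufficiently saturated models, uses the hypothesis $\Delta(K)=\Delta(L)$ to produce an isomorphism between the absolute numbers together with a compatible isomorphism of the stratified inverse systems of the absolute Galois groups, and then applies an embedding lemma for PAC fields --- the PAC property is what allows one to realize, inside a saturated PAC field, a regular extension of a small subfield with prescribed Galois-theoretic behavior --- to build a back-and-forth system of partial isomorphisms; elementary equivalence follows from the back-and-forth, not from a sentence-by-sentence translation. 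Your sketch explicitly concedes this hole (``the main obstacle \ldots is precisely this coding'') and leaves it open, so the argument is incomplete at exactly the decisive step.
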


We also need the following results.
\begin{thm}\cite{FJ}\label{hilb} Infinite finitely generated fields are Hilbertian.\end{thm}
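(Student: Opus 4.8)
The plan is to reduce the statement to three standard stability facts about Hilbertian fields and then assemble them according to the transcendence degree of $K$ over its prime field. Write $P$ for the prime field of $K$, so $P=\Q$ or $P=\F_p$, and let $r$ denote the transcendence degree of $K$ over $P$. Since $K$ is finitely generated over $P$, I would fix a transcendence basis $t_1,\dots,t_r$ and observe that $K$ is then a \emph{finite} extension of the rational function field $P(t_1,\dots,t_r)$: the extension $K/P(t_1,\dots,t_r)$ is algebraic, and being both algebraic and finitely generated it is finite. The three facts I would invoke are (i) Hilbert's irreducibility theorem, namely that $\Q$ is Hilbertian; (ii) that for an arbitrary field $F$ the rational function field $F(t)$ is Hilbertian; and (iii) that a finite extension of a Hilbertian field is again Hilbertian.

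With these in hand the argument splits on $r$. First suppose $r\geq 1$. Iterating (ii) along the tower $P\subseteq P(t_1)\subseteq\dots\subseteq P(t_1,\dots,t_r)$, applying (ii) at the $i$-th step with base field $P(t_1,\dots,t_{i-1})$, shows that $P(t_1,\dots,t_r)$ is Hilbertian; then (iii), applied to the finite extension $K/P(t_1,\dots,t_r)$, shows that $K$ is Hilbertian. This case already covers every infinite field of characteristic $p$, since a finitely generated algebraic extension of $\F_p$ is finite, so an infinite finitely generated field of characteristic $p$ must have $r\geq 1$; it also covers every characteristic-zero field of positive transcendence degree. The only remaining case is $P=\Q$ with $r=0$, i.e.\ $K$ a number field, and here $K$ is a finite extension of $\Q$, so Hilbertianity is immediate from (i) together with (iii).

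Thus all the content lies in establishing (i)--(iii); the packaging above is routine. Fact (i) is the classical Hilbert irreducibility theorem, proved by the usual counting estimate over $\Q$. The step I expect to be the genuine obstacle is fact (ii) over a \emph{finite} base field $F=\F_p$: one cannot bootstrap from Hilbertianity of $\F_p$, which is finite and hence not Hilbertian, so the soft implication ``$F$ Hilbertian $\Rightarrow F(t)$ Hilbertian'' is unavailable at the bottom of the characteristic-$p$ tower, and one must show directly that $\F_p(t)$ is Hilbertian. This demands a genuinely arithmetic-geometric input --- controlling the reductions of the cover defined by a given $f(t,T,X)$ and producing irreducible specializations of $T$, e.g.\ through function-field and Riemann--Roch considerations or a Lang--Weil type point count --- rather than any purely model-theoretic manipulation. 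Fact (iii) is the other delicate ingredient: for a finite extension $L=K(\theta)$ and an irreducible $f\in L(T)[X]$, the plan is to descend $f$ to an irreducible polynomial over $K$ via the primitive element and a norm-type construction recording $L$-irreducibility in $K$-data, apply Hilbertianity of $K$ to obtain infinitely many $K$-specializations preserving irreducibility, and then lift these to specializations over $L$; the care is in arranging that the resulting Hilbert set pulls back to an infinite set over $L$ on which $f$ stays irreducible. Combining (i)--(iii) as above yields the theorem.
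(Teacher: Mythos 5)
Your argument is correct and is essentially the same as the proof the paper relies on: the paper gives no argument of its own, simply citing Fried--Jarden (Theorem 13.4.2), and the proof there follows exactly your decomposition --- reduce via a transcendence basis to a finite extension of $P(t_1,\dots,t_r)$, then combine Hilbert's irreducibility theorem for $\Q$, Hilbertianity of rational function fields over an arbitrary base field, and preservation of Hilbertianity under finite extensions. Your observation that the crux is the function-field case over $\F_p$ (since finite fields are not Hilbertian, one cannot bootstrap from the base) correctly locates where the real work lies in that source.
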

\begin{proof} See \cite{FJ}, Theorem 13.4.2, pp. 242.\end{proof}

\begin{thm}(Jarden) \label{jarden-pac}If $L$ is a countable Hilbertian field, then the set 
$$\{\sigma \in Gal(L): Fix(\sigma) ~ \rm{is~ pseudofinite}\}$$
has measure $1$.
\end{thm}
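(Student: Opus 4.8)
The plan is to equip $Gal(L)$, a compact profinite group, with its normalized Haar measure $\mu$, and to reduce the statement to two measure-one conditions via Ax's criterion that a field is pseudofinite exactly when it is perfect, PAC, and has absolute Galois group isomorphic to $\hat{\Z}$. Writing $\tilde{L}$ for the separable closure and $Fix(\sigma)$ for the fixed field of $\sigma$ in $\tilde{L}$, the absolute Galois group of $Fix(\sigma)$ is the procyclic group $\overline{\langle\sigma\rangle}$, and $Fix(\sigma)$ inherits perfectness from $L$ (as in the intended application $L$ is perfect). Thus it suffices to show that each of the sets
$$P = \{\sigma : Fix(\sigma)\ \text{is PAC}\}, \qquad C = \{\sigma : \overline{\langle\sigma\rangle}\cong\hat{\Z}\}$$
has measure $1$, since then $P\cap C$ has measure $1$ and consists precisely of $\sigma$ with $Fix(\sigma)$ pseudofinite.

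I would dispatch $C$ first, as the routine half. The group $\overline{\langle\sigma\rangle}$ is always procyclic, and is isomorphic to $\hat{\Z}$ iff it surjects onto $\Z/n\Z$ for every $n$. Using that a Hilbertian field admits cyclic extensions of every degree, together with a Borel--Cantelli argument run over prime powers to control the image of $\sigma$ in the relevant cyclic quotients, one checks that for each $n$ the set $C_n$ of $\sigma$ surjecting onto $\Z/n\Z$ has full measure, whence $C=\bigcap_n C_n$ has measure $1$.

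The substance is the PAC condition. I would first reduce PAC-ness of $Fix(\sigma)$ to the countably many data $(L',V)$, where $L'/L$ is a finite extension and $V$ an absolutely irreducible variety over $L'$: any absolutely irreducible variety over $Fix(\sigma)$ descends to a finitely generated, hence finite, subextension $L'\subseteq Fix(\sigma)$, i.e. $\sigma\in Gal(\tilde{L}/L')$, and such $L'$ are again Hilbertian. It then suffices to prove, for each such pair, that $\{\sigma\in Gal(\tilde{L}/L') : V(Fix(\sigma))\neq\emptyset\}$ has full measure inside $Gal(\tilde{L}/L')$. Reducing $V$ to an absolutely irreducible curve by hyperplane sections, I would project the curve dominantly to $\AA^1$ and invoke Hilbert irreducibility to produce infinitely many $t_i\in L'$ whose fibers are single closed points $P_i$ with residue fields $L'(P_i)$; choosing the $t_i$ from independent Hilbert sets makes the Galois closures $N_i/L'$ linearly disjoint. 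A point of $V$ lies in $Fix(\sigma)$ as soon as $\sigma$ fixes one of the finitely many $\tilde{L}$-points above some $P_i$; since the arithmetic monodromy can be taken to be the full symmetric group, a Cauchy--Frobenius count bounds the probability of the event $E_i=\{\sigma\ \text{fixes a point above}\ P_i\}$ below by an absolute constant $c>0$, and linear disjointness of the $N_i$ makes the $E_i$ independent. As $\sum_i\mu(E_i)=\infty$, the second Borel--Cantelli lemma places almost every $\sigma$ in infinitely many $E_i$, giving a $Fix(\sigma)$-rational point and hence $\mu(A_{(L',V)})=1$.

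The main obstacle will be the geometric construction in the third paragraph: arranging, via Hilbert irreducibility, infinitely many closed points whose residue fields are linearly disjoint with controlled monodromy, so that the fixed-point events are genuinely independent with uniformly positive probability, and making the curve reduction and the descent to finite subextensions precise. Granting this, intersecting the countably many full-measure sets $A_{(L',V)}$ with $C$ yields a measure-one set of $\sigma$ for which $Fix(\sigma)$ is perfect, PAC, and has absolute Galois group $\hat{\Z}$, hence pseudofinite, completing the proof.
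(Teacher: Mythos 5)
The paper offers no argument for this theorem: its ``proof'' is a citation of Jarden's paper and of Fried--Jarden, Theorem 18.6.1, so the only meaningful comparison is with the proof in those references. Your outline is essentially that proof: Ax's characterization of pseudofinite fields (perfect, PAC, absolute Galois group $\hat{\Z}$); a Borel--Cantelli argument over linearly disjoint cyclic extensions (which exist in every degree over any Hilbertian field) for the $\hat{\Z}$ condition; and, for the PAC condition, a reduction to countably many pairs $(L',V)$ with $L'/L$ finite, followed by Hilbert-irreducibility specializations whose splitting fields are linearly disjoint, so that the fixed-point events are independent with probability bounded below via Cauchy--Frobenius. The part you flag as the main obstacle is exactly the content of the lemmas in Fried--Jarden leading to Theorem 18.6.1, so your sketch locates the difficulty correctly rather than evading it.

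Two points need repair. First, your treatment of perfectness is wrong: the theorem does not assume $L$ perfect, and in this paper it is applied to $L=k'$, a finitely generated extension of $\F_p$ of positive transcendence degree, which is never perfect; so $Fix(\sigma)$ cannot ``inherit perfectness from $L$''. The correct observation is that if $Fix(\sigma)$ is taken inside the algebraic closure (extending $\sigma$ canonically from the separable closure), it is automatically perfect: if $a$ is fixed by $\sigma$, so is its unique $p$-th root; equivalently, one may pass to the perfect closure of the fixed field in the separable closure, which changes neither PAC-ness nor the absolute Galois group. Without this, your argument does not cover the very case the paper needs. Second, your lower bound on $\mu(E_i)$ should not be routed through the claim that the arithmetic monodromy ``can be taken to be the full symmetric group'': that is not something you can arrange for free, particularly in positive characteristic, and it is also unnecessary, since Cauchy--Frobenius applied to any transitive action on $n$ points already gives a proportion at least $1/n$ of elements with a fixed point, and $n$ is constant along the sequence of specializations, which is all the Borel--Cantelli step requires.
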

\begin{proof} See \cite[pp.76]{jarden-pac} or \cite[Theorem 18.6.1,pp. 380]{FJ}.\end{proof}

Now we can give the proof of Theorem \ref{mc}. The condition \ref{cond} implies that every finite algebraic extension $K(\alpha)$ of $K$ is generated by elements algebraic 
over $k$, and thus by the primitive element theorem, by a single algebraic element $\alpha$. 

Now all this is part of the theory $Th(K)$. For example, the unique extension of $K$ of dimension $n$ is generated by a root 
$\alpha$ of some polynomial $f$ over $k$. Fix the minimum polynomial $f$ of $\alpha$. Then we just say that some root of $f$ 
generates the unique extension of $K$ of dimension $n$. This will be true for any $L$ with $L \equiv K$. It follows that any 
embedding $L\rightarrow K_1$ of models of $Th(K)$ is regular, and thus elementary (cf. Cherlin-van den Dries-Macintyre \cite{CDMM} or Jarden-Kiehne \cite{JK}).

Conversely, Suppose that \ref{cond} does not hold. We shall prove that $Th(K)$ is not model-complete. Since 
$K^{alg} \neq K \otimes_{Abs(K)} k^{alg}$, there is some finite algebraic extension $K(\alpha)$ that is not included in 
any $K(\beta)$, where $\beta$ is algebraic over $k$. Now consider such a field $K(\alpha)$ of minimal dimension $d$ over $K$.  $K(\alpha)$ 
is normal cyclic over $K$, so $d$ is a prime $p$, otherwise, $d=p_1^{k_1}\dots p_r^{k_r}$, where $n>1$, and each of the degree $p_j^{k_j}$ extensions is included in some 
$K(\beta)$ that is algebraic over $k$, and so $K(\alpha)$ is too, contradiction. Thus $K(\alpha)$ is a dimension $p$ extension of $K$. 

%some $K(\gamma)$ which is included in some $K(\beta)$, $\beta$ algebraic. 
%So $K(\beta,\alpha)$ is of dimension $p$ over $K(\beta)$.

%{\bf Case 1} $K(\alpha)$ has dimension $p$ over $K$.

%{\bf Case 2} $K(\alpha)$ has dimension greater than $p$ over $K$.

%Suppose we are in Case 2. $Abs(K)$ has an extension of dimension divisible by $p$. We have 2 subcases.

%{\bf Subcase 1} $Abs(K)$ has no extension of dimension $p^n$, for $n\geq n_0$ for some $n_0$.

%Since the absolute Galois group of $Abs(K)$ is procyclic, and there is at least one extension of dimension $p$, 
%$Gal(Abs(K))$ has an element of order $p$, so $p=2$ and $Abs(K)$ is real-closed. But this fixes $Th(K)$ (cf. 
%Fried-Jarden \cite{FJ}), and we deal with this case under Case 1 below, taking $p=3$.

%{\bf Subcase 2} $Abs(K)$ has extension of dimension $p^n$ for arbitrarily large $n$, so of each $n$ (by procyclic). But 
%it has one of dimension $p$, so $K(\alpha) \subseteq Abs(K)$.

%Now we deal with the Case 1 above. This implies that $Abs(K)$ has no extension of dimension $p$. So now we consider the 
%Case 1 and the case that $Abs(K)$ is real-closed together. In the latter case we can choose an element $\alpha$ of 
%dimension $3$ over $K$, and clearly $K(\alpha) \nsubseteq K(i)$, so $K(\alpha) \nsubseteq K(\beta)$ for any algebraic 
%$\beta$. The only difference from Case 1 is that we replace $p$ by $3$. 

Now let $f$ be the minimum polynomial of $\alpha$ over $K$. Put 

$$\Lambda:=Diag(K) \cup \Sigma_{PAC} \cup \{\exists x~ (f(x)=0)\} \cup \{ \forall x~ (g(x) \neq 0), g\in \Theta\} \cup \Delta(K)$$
were $\Sigma_{PAC}$ denotes the set of sentences expressing the condition of being pseudo-algebraically closed, $\Theta$ is the set of polynomials in one variable over $k$ which are unsolvable in $K$, and $\Delta(K)$ is the Galois diagram of $K$.

\begin{claim} $\Lambda$ is consistent.\end{claim}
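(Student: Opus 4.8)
The plan is to establish consistency by exhibiting an explicit model, and the natural candidate is the extension $K(\alpha)$ itself. Recall that $K(\alpha)$ was chosen to be a cyclic extension of $K$ of prime degree $p$ that is \emph{not} generated by elements algebraic over $k$, and that $f$ is the minimum polynomial of $\alpha$ over $K$. I will verify that $L:=K(\alpha)$ satisfies every sentence of $\Lambda$. Immediately, $L\supseteq K$ supplies $Diag(K)$, and $\alpha\in L$ with $f(\alpha)=0$ supplies $\exists x~(f(x)=0)$. For $\Sigma_{PAC}$ I invoke Ax's theorem that an algebraic, in particular finite, extension of a pseudo-algebraically closed field is again pseudo-algebraically closed; since $K$ is PAC, so is $L=K(\alpha)$.

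The central step is the identity $Abs(L)=Abs(K)$, i.e. $L\cap k^{alg}=K\cap k^{alg}$. Suppose not, so that there is $\beta\in L\cap k^{alg}$ with $\beta\notin K$. Then $K\subsetneq K(\beta)\subseteq K(\alpha)=L$, and since $[L:K]=p$ is prime there are no proper intermediate fields, whence $K(\beta)=K(\alpha)$. But then $\alpha\in K(\beta)$, so $K(\alpha)$ is generated over $K$ by the element $\beta$ algebraic over $k$, contradicting the choice of $K(\alpha)$. Hence $L\cap k^{alg}=Abs(K)$. From this the clauses $\forall x~(g(x)\neq 0)$ for $g\in\Theta$ follow at once: any root in $L$ of a polynomial $g\in k[x]$ is algebraic over $k$, hence lies in $L\cap k^{alg}=Abs(K)\subseteq K$, so a root in $L$ of some $g\in\Theta$ would make $g$ solvable in $K$, contrary to $g\in\Theta$.

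It remains to check $\Delta(K)\subseteq Th(L)$, and here the reason is purely syntactic. By definition $\Delta(K)$ consists of \emph{existential} sentences, each of which is placed in $\Delta(K)$ precisely because it is witnessed in $K$ by a tuple from $Abs(K)$; thus $K\models\Delta(K)$. Since existential sentences are preserved under the extension $K\subseteq L$, we obtain $L\models\Delta(K)$. Collecting the pieces, $L=K(\alpha)$ is a model of $\Lambda$, so $\Lambda$ is consistent.

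The main obstacle is the middle step, the equality $Abs(L)=Abs(K)$: this is exactly where the prime-degree minimality in the choice of $\alpha$ is essential, since it guarantees that $K(\alpha)$ has no proper intermediate field and therefore acquires no new elements algebraic over $k$. Everything else is either immediate or reduces to Ax's stability of the PAC property under algebraic extension and to the existential shape of the Galois diagram, so I expect no further difficulty beyond recording these points carefully.
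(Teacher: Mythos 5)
Your choice of $L=K(\alpha)$ is a genuinely different strategy from the paper's (which never exhibits a single global model, but instead verifies finite fragments of $\Lambda$ by compactness, realizing each fragment as a pseudofinite fixed field $Fix(\sigma)$ over a finitely generated, hence Hilbertian, subfield via Jarden's theorem). Several of your steps are correct and clean: $Diag(K)$ and $\exists x\,(f(x)=0)$ are immediate, algebraic extensions of PAC fields are indeed PAC, and your prime-degree argument that $Abs(K(\alpha))=Abs(K)$ (hence that no $g\in\Theta$ acquires a root) is exactly right. But the step where you dispose of $\Delta(K)$ contains a genuine gap, and it is precisely the crux of the whole claim. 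The sentences of the Galois diagram are \emph{not} existential sentences of the language of rings: they begin with an existential block, but their matrix contains the translation $\varphi'$ of a coformula, and such translations assert co-theoretic facts about the inverse system of finite quotients of the absolute Galois group --- for instance that a polynomial whose coefficients are among the witnesses is irreducible, that its splitting field has a prescribed degree, or that its Galois group is a prescribed finite group. These conditions involve universal quantifiers and are not preserved under field extensions; a sentence of this shape, such as ``there exists a Galois extension of degree $n$,'' is true in a field and false in its algebraic closure. So ``existential shape'' buys you nothing, and $K(\alpha)\models\Delta(K)$ is exactly the nontrivial point, not a syntactic triviality.

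The conclusion you want is in fact true, but rescuing it requires Galois-theoretic input absent from your proposal. One must first note that the absolute Galois group of a PAC field is projective (alternatively, invoke Artin--Schreier), so the pro-cyclic group $Gal(K)$ is torsion-free, i.e.\ $Gal(K)\cong\prod_{\ell\in S}\Z_\ell$ for some set $S$ of primes; only then is the open index-$p$ subgroup $Gal(K(\alpha))$ isomorphic to $Gal(K)$ (had $Gal(K)$ been allowed a finite $p$-part $\Z/p^n$, the cosentence asserting a quotient of order $p^n$ would lie in $\Delta(K)$ and fail in $K(\alpha)$, so some such input is unavoidable). Next, using your equality $Abs(K(\alpha))=Abs(K)$, one must choose this isomorphism compatibly with the restriction maps to $Gal(k^{alg}/Abs(K))$, and finally invoke the Cherlin--van den Dries--Macintyre machinery (or the Jarden--Kiehne embedding lemma) to convert the compatible isomorphism into $K\equiv K(\alpha)$, which gives $\Delta(K)=\Delta(K(\alpha))$ and hence $K(\alpha)\models\Lambda$. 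The paper's compactness route avoids exactly this difficulty: in a finite fragment of $\Delta(K)$ only finitely many finite Galois extensions of $K$ occur, and the fixed-field construction is arranged by hand to satisfy them, so no comparison of the full co-theories of $K$ and of a fixed model is ever needed.
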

% we get $K\rightarrow L$, $L$ elementary equivalent to $K$ which is not a $\Sigma_1$-extension. 

We do a compactness argument. 
Consider a finite subset $\Lambda_0$ of $\Lambda$. It involves a finite set $c_0,\dots,c_m$ from $K$ including coefficients of $f$ and a finite part of $Diag(K)$, finitely many 
$g_1,\dots,g_l$ from $\Theta$, a $t$ with $f(t)=0$, and a finite part of the Galois diagram $\Delta(K)$. Given a finite part $S$ of 
the Galois diagram $\Delta(K)$, $S$ contains finitely many statements describing the isomorphism types of fields generated by finitely many finite subsets $S_1,\dots,S_k$ of $K$, and translations to the language of rings of finitely many 
coformulas. The translations of the coformulas involve Galois groups of finitely many finite extensions of $K$. The compositum of these is a finite Galois extension $K(T)$ of $K$, for a finite set $T$.

%Note that $t.d(\{c_0,dots,c_m\})\geq 1$, and $k(c_0,\dots,c_m)$ is infinite, finitely generated, so

Note that $tr.deg.(k(\alpha,c_0,\dots,c_m,S_0,\dots,S_k,T))\geq 1$, so  by Theorem \ref{hilb}, 
$k':=k(\alpha,c_0,\dots,c_m,S_0,\dots,S_k,T)$ is Hilbertian. Note that $f$ is irreducible of dimension $p$ over $k'$. Now if 
we adjoin to $k'$ a root $\alpha$ of $f$, then none of $g_1,\dots,g_l$ get a root. For if one does, that root is either in 
$k'$ which is impossible, or has dimension congruent to zero modulo $p$ over $k'$, and 
then $\alpha \in K(\beta)$, for some $\beta$ which is algebraic over $k$.

So now apply Theorem \ref{jarden-pac} to $k'$ and deduce that the set of 
all $\sigma \in G(k')$ such that  $Fix(\sigma)$ is pseudofinite has measure $1$. Note that given a polynomial 
$g(x)$ over $k$, the set $$G_g:=\{\sigma \in G(k'): Fix(\sigma)~\text{does not contain a root of $g$}\}$$
is open in $G(k')$ since $U:=Gal(k'^{alg}/F)$ is a basic open set containing the identity in $G(k')$ where $F$ is the splitting field of $g(x)$, and $\sigma U \in G_g$ for any $\sigma\in G_g$. Thus the set 
$$\{\sigma \in G(k'): \text{$g_1,\dots,g_l$ do not have a root in $Fix(\sigma)$ and $Fix(\sigma)$ is pseudofinite}\}$$
has non-zero measure. Note that for any such $\sigma$, the fixed field $Fix(\sigma)$ contains the given finite part of $Diag(K)$, contains a root of $f$ (namely $\alpha$), and contains 
$T$. Thus $Fix(\sigma)$ must satisfy the finitely many given statements from the Galois diagram $\Delta(K)$ and the diagram $Diag(K)$ as these can be witnessed by finitely many elements from 
$S_1\cup \dots \cup S_k \cup T$ (by adding constants symbols). We deduce that $Fix(\sigma)$ is a model of $\Lambda_0$. Thus $\Lambda$ has a model $L$.

%{\bf The Procyclic case} gma)$ is a model of $\Lambda_0$. Thus $\Lambda$ has a model $L$.

We need to show that $\Delta(K)=\Delta(L)$. It is obvious that $\Delta(K) \subseteq \Delta(L)$. We show that $\Delta(L) \subseteq \Delta(K)$. Suppose that $\psi \in \Delta(L)$ and $\psi \notin \Delta(K)$. 
Then $\psi$ involves statements on isomorphism type and translations of coformulas corresponding to a finite subset of $Abs(L)$ that does not hold for $K$. But $Abs(K)=Abs(L)$, so $\neg \psi$ holds for the finitely many 
elements of $Abs(K)$, hence $\neg\psi \in\Delta(L)$ by adding constants for the distinguished elements of $Abs(K)=Abs(L)$, which is a contradiction. 

Applying Theorem \ref{cdm}, we deduce that $K$ and $L$ are elementarily equivalent. Clearly $K$ is not an elementary submodel of $L$. This completes the proof.

\
%We now adapt the preceding argument to the procyclic case (assuming still perfect) using Cherlin-van den Dries-Macintyre CDM2 \cite{CDM}. We get a complete set of elementary invariants for perfect procyclic PAC fields. 
%For such fields, $Th(K)$ is determined by $Abs(K)$ and the set  of finite groups $G$ that occur as groups of finite normal extensions 
%(and this by Gal(K)). So we get an obvious version of Fact 1. Fact 2 and Fact 3 are as before. Now proof of Theorem ? adapts 
%adding the $G$-conditions. For the compactness argument we add the $G$-conditions and the jardens argument goes through. 

%\begin{thebibliography}{10}

%\end{thebibliography}

\bibliographystyle{amsplain}
\bibliography{anbib}
\end{document}